\begin{document}

\title*{A perturbation result for a Neumann problem in a periodic domain}
% Use \titlerunning{Short Title} for an abbreviated version of
% your contribution title if the original one is too long
\author{Matteo Dalla Riva, Paolo Luzzini, and Paolo Musolino}
% Use \authorrunning{Short Title} for an abbreviated version of
% your contribution title if the original one is too long

\institute{Matteo Dalla Riva \at Dipartimento di Ingegneria, Universit\`a degli Studi di Palermo, Viale delle Scienze, Ed. 8, 90128 Palermo, Italy, \email{matteo.dallariva@unipa.it}
\and Paolo Luzzini \at Dipartimento di Matematica `Tullio Levi-Civita', Universit\`a degli Studi di Padova, Via Trieste 63, Padova 35121, Italy, \email{pluzzini@math.unipd.it}
\and Paolo Musolino  \at Dipartimento di Scienze Molecolari e Nanosistemi, Universit\`a Ca' Foscari Venezia, via Torino 155, 30172 Venezia Mestre, Italy, \email{paolo.musolino@unive.it}}

%
% Use the package "url.sty" to avoid
% problems with special characters
% used in your e-mail or web address
%
\maketitle

\abstract{We consider a Neumann problem for the Laplace equation in a periodic domain. We prove  that the solution  depends real analytically 
on the shape of the domain, on the periodicity parameters, on the Neumann datum, and on its boundary integral.}

\section{Introduction}

The aim of this paper is to prove the analytic dependence of the solution of a periodic Neumann problem for the Laplace equation, upon joint perturbation of the domain, the periodicity parameters, the Neumann datum, and  its integral on the boundary.  The domain is obtained as the union of congruent copies of a periodicity cell of edges of length 
$q_{11},\ldots,q_{nn}$ with a hole whose shape is the image of a reference domain through a diffeomorphism $\phi$. As Neumann datum we take the projection of a function $g$, defined on the boundary of the reference domain and suitably rescaled,  on the space of functions with zero integral on the boundary. As it happens for non-periodic Neumann problems, in order to identify one solution, we impose that the integral of the solution on the boundary is equal to a given real constant $k$. By means of a periodic version of potential theory,  we prove that the solution of the problem depends 
real analytically 
on the `periodicity-domain-Neuman datum-integral' quadruple $((q_{11},\ldots,q_{nn}), \phi,g,k)$.

Many authors have investigated the behavior of the solutions to boundary value problems upon domain perturbations. 
 We mention,  e.g., Henry \cite{He82} and Sokolowski and Zol\'esio \cite{SoZo92} for elliptic domain perturbation problems. Lanza de Cristoforis  \cite{La05, La07} has exploited potential theory in order to prove that the solutions of boundary value problems for the Laplace and Poisson equations depend real analytically upon domain perturbation. Moreover, analyticity results for domain perturbation problems for eigenvalues have been obtained for example for the Laplace equation by Lanza de Cristoforis and Lamberti \cite{LaLa04}, for the biharmonic operator by Buoso and Provenzano \cite{BuPr15}, and for the Maxwell's equations by Lamberti and Zaccaron \cite{LaZa21}.

In order to introduce our problem, we fix once for all a natural number
 \[
 n \in \mathbb{N} \setminus \{0,1\}\, 
 \] 
that represents the dimension of the space.
If $(q_{11},\ldots,q_{nn}) \in \mathopen]0,+\infty[^n$ we define a periodicity cell $Q$ and a matrix $q \in {\mathbb{D}}_{n}^{+}({\mathbb{R}})$ as
\[%\begin{equation}\label{qdef1}
Q \equiv \prod_{j=1}^n \mathopen]0,q_{jj}[, \quad
q \equiv 
 \begin{pmatrix}
  q_{11} & 0 & \cdots & 0 \\
  0 & q_{22} & \cdots & 0 \\
  \vdots  & \vdots  & \ddots & \vdots  \\
  0 & 0 & \cdots & q_{nn} 
 \end{pmatrix},
\]%\end{equation}
where ${\mathbb{D}}_{n}({\mathbb{R}})$ is the space of $n\times n$ diagonal matrices with real entries and ${\mathbb{D}}_{n}^{+}({\mathbb{R}})$ is the set of elements of 
${\mathbb{D}}_{n}({\mathbb{R}})$ with diagonal entries in $]0,+\infty[$. Here we note that we can identify ${\mathbb{D}}_{n}^{+}({\mathbb{R}})$ and $]0,+\infty[^n$.
We denote by $|Q|_n$  the $n$-dimensional measure of the cell $Q$, by $\nu_Q$ the outward unit normal to $\partial Q$, where it exists, and by $q^{-1}$ the inverse matrix of $q$. We find convenient to set
\[
\widetilde{Q}\equiv \mathopen]0,1[^n\, , \qquad \tilde{q}  \equiv I_n \,,
\]
where $I_n$ denotes the identity $n\times n$ matrix. Then we introduce the reference domain: we take
\begin{equation}\label{Omega_def}
\begin{split}
&\text{$\alpha \in \mathopen]0,1[$ and a bounded open connected subset $\Omega$ of $\mathbb{R}^{n}$}
\\
&\text{of  class  $C^{1,\alpha}$ such that $\mathbb{R}^{n}\setminus\overline{\Omega}$ is  connected}\, ,
\end{split}
\end{equation}
where the symbol `$\overline{\cdot}$' denotes the closure of a set.
For the definition of sets and functions of the Schauder class $C^{1,\alpha}$ we refer,  e.g., to Gilbarg and
Trudinger~\cite{GiTr83}.  In order to model our variable domain 
we consider a class of diffeomorphisms 
${\mathcal{A}}_{  \partial\Omega  }^{\widetilde{Q}}$ from $\partial\Omega$ into  their images contained in $\widetilde{Q}$ (see \eqref{eq:defA} below). 
By the Jordan-Leray separation theorem, if $\phi\in {\mathcal{A}}_{\partial\Omega}^{\widetilde{Q}}$,  
 the set
${\mathbb{R}}^{n}\setminus\phi (\partial\Omega)$ has exactly two open 
connected components (see, e.g., Deimling \cite[Thm.  5.2, p. 26]{De85}). We denote by ${\mathbb{I}}[\phi]$  the 
bounded  open connected component of 
${\mathbb{R}}^{n}\setminus\phi (\partial\Omega)$.
Since $\phi (\partial\Omega)\subseteq \widetilde{Q}$, a topological argument shows that 
$\widetilde{Q}\setminus\overline{\mathbb{I}[\phi]}$
is also connected (cf., e.g., \cite[Theorem A.10]{DaLaMu21}). 
We are now in the position to introduce the following two periodic domains (see Figure \ref{fig1}):
\begin{equation*}
\mathbb{S}_{q }[q \mathbb{I}[\phi]] \equiv\bigcup_{z\in {\mathbb{Z}}^{n}}\left(
q z+q {\mathbb{I}}[\phi] 
\right), \qquad
\mathbb{S}_{q }[q \mathbb{I}[\phi]]^- \equiv
{\mathbb{R}}^{n}\setminus\overline{\mathbb{S}_{q }[q \mathbb{I}[\phi]]} \,.
\end{equation*}
The set $\mathbb{S}_{q }[q \mathbb{I}[\phi]]^-$ will be the one where we shall set our Neumann problem. 
 Clearly, a perturbation of $q$ produces a modification of the whole periodicity structure of  $\mathbb{S}_{q }[q \mathbb{I}[\phi]]^-$, while a perturbation of $\phi$ induces a change in the shape of the holes $\mathbb{S}_{q }[q \mathbb{I}[\phi]]$.

\begin{figure}%[b]
\sidecaption
% Use the relevant command for your figure-insertion program
% to insert the figure file.
% For example, with the graphicx style use
\includegraphics[scale=.18]{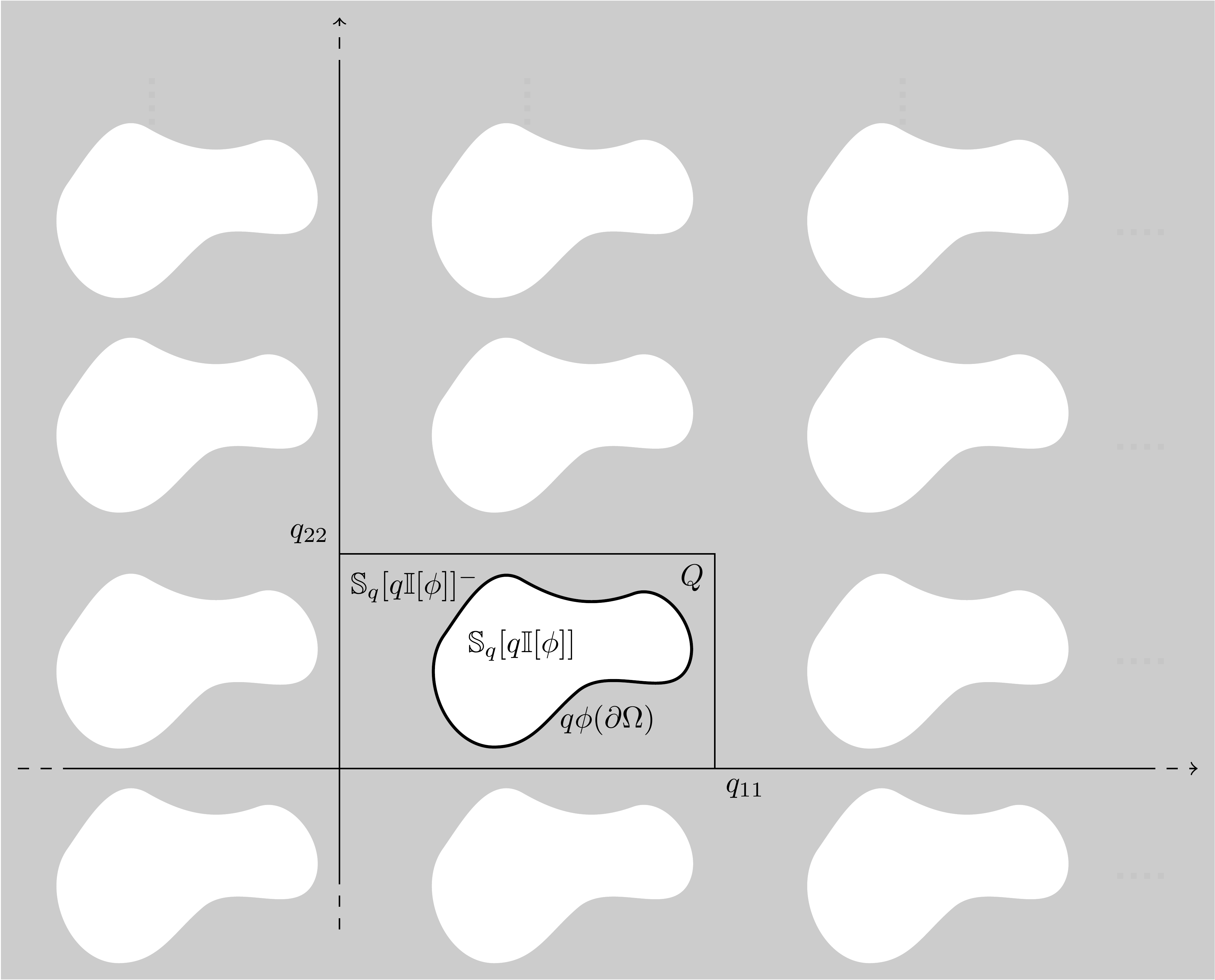}
%
% If no graphics program available, insert a blank space i.e. use
%\picplace{5cm}{2cm} % Give the correct figure height and width in cm
%
\caption{The sets $\mathbb{S}_{q}[q\mathbb{I}[\phi]]^-$ (in gray),  
$\mathbb{S}_{q}[q\mathbb{I}[\phi]]$ (in white), and  $q\phi(\partial\Omega)$ (in black) in case $n=2$.}
\label{fig1}       % Give a unique label
\end{figure}

If  $q \in \mathbb{D}^+_n(\mathbb{R})$, $\phi \in C^{1,\alpha}(\partial\Omega,\mathbb{R}^n) \cap {\mathcal{A}}_{\partial\Omega}^{\widetilde{Q}}$, $g \in C^{0,\alpha}(\partial \Omega)$ and $k \in \mathbb{R}$, we consider the following periodic Neumann problem for the Laplace equation:
\begin{equation}
\label{bvp}
\left\{
\begin{array}{ll}
\Delta u=0 & {\mathrm{in}}\ \mathbb{S}_{q}[q\mathbb{I}[\phi]]^{-}\,,
\\
u(x+qz)=u(x)& \forall x \in \overline{\mathbb{S}_{q}[q\mathbb{I}[\phi]]^{-}}\, , \forall z \in\mathbb{Z}^n\,,\\
\frac{\partial}{\partial \nu_{q\mathbb{I}[\phi]}}u(x)=g\big( \phi^{(-1)}(q^{-1}x)\big)&\\
\qquad \qquad-\frac{1}{\int_{\partial q\mathbb{I}[\phi]}\, d\sigma}\int_{\partial q\mathbb{I}[\phi]} g\big( \phi^{(-1)}(q^{-1}y)\big) \, d\sigma_y& \forall x\in  \partial q\mathbb{I}[\phi] \,, \\
\int_{\partial q\mathbb{I}[\phi]}u\, d\sigma=k\, .&  \\
\end{array}
\right.
\end{equation}
We note that the function
\[
g\big( \phi^{(-1)}(q^{-1}\cdot)\big)-\frac{1}{\int_{\partial q\mathbb{I}[\phi]}\, d\sigma}\int_{\partial q\mathbb{I}[\phi]} g\big( \phi^{(-1)}(q^{-1}y)\big) \, d\sigma_y
\]
clearly belongs to the space
\[
C^{0,\alpha}(\partial q\mathbb{I}[\phi])_0 \equiv \Big\{\mu \in C^{0,\alpha}(\partial q\mathbb{I}[\phi]) \colon \int_{\partial q\mathbb{I}[\phi]}\mu\, d\sigma=0\Big\}\, .
\]
As a consequence,  the solution of problem \eqref{bvp} in the space $C^{1,\alpha}_{q}(\overline{\mathbb{S}_{q}[q\mathbb{I}[\phi]]^{-}})$ of $q$-periodic functions in $\overline{\mathbb{S}_{q}[q\mathbb{I}[\phi]]^-}$ of class $C^{1,\alpha}$ exists and  is unique and we denote it by $u[q,\phi,g,k]$ (see \cite[Thm.~12.23]{DaLaMu21}).  Our aim is to prove that $u[q,\phi,g,k]$ depends, in a sense that we will clarify, analytically on $(q,\phi,g,k)$  (see Theorem \ref{mainthm}). 
Our work originates from Lanza de Cristoforis  \cite{La05, La07} on the real analytic dependence 
of the solution of the Dirichlet problem for the Laplace and Poisson equations upon domain perturbations. Moreover, this paper can be seen as the Neumann counterpart of \cite{LuMu22}, where the authors have proved analyticity properties for the solution of a periodic Dirichlet problem. An analysis similar to the one of the present paper was also carried out for periodic problems related to physical quantities arising in fluid mechanics and in material science (see \cite{DaLuMuPu21, LuMu20, LuMuPu19}).

\section{Preliminary results}

In order to consider shape perturbations, we introduce a class of diffeomorphisms. Let $\Omega$ be as in \eqref{Omega_def}. Let  $\mathcal{A}_{\partial \Omega}$ be the set of functions of class $C^1(\partial\Omega, \mathbb{R}^{n})$ which are injective and whose differential is injective  at all points  of $\partial\Omega$. The set $\mathcal{A}_{\partial \Omega}$ is well-known to be open 
in $C^1(\partial\Omega, \mathbb{R}^{n})$
(see,  e.g., Lanza de Cristoforis and Rossi \cite[Lem. 2.5, p. 143]{LaRo04}).
Then we  set
\begin{equation}
\label{eq:defA}
{\mathcal{A}}_{\partial\Omega}^{\widetilde{Q}} \equiv \Big\{\phi \in\mathcal{A}_{\partial \Omega} : \phi(\partial\Omega) \subseteq \widetilde{Q}\Big\}.
\end{equation}

In order to analyze our boundary value problem, we are going to exploit periodic layer potentials. To define these operators, it is enough to replace the fundamental solution of the Laplace operator by a $q$-periodic tempered distribution $S_{q,n}$ such that $\Delta S_{q,n}=\sum_{z\in {\mathbb{Z}}^{n}}\delta_{qz}-\frac{1}{|Q|_n}$,
where $\delta_{qz}$ is the Dirac measure with mass in $qz$
(see e.g., \cite[Chapter 12]{DaLaMu21}).
We can take
\begin{equation*}
S_{q,n}(x)=-\sum_{ z\in {\mathbb{Z}}^{n}\setminus\{0\} }
\frac{1}{     |Q|_n4\pi^{2}|q^{-1}z|^{2}   }e^{2\pi i (q^{-1}z)\cdot x}
\end{equation*}
in the sense of distributions in $ {\mathbb{R}}^{n}$ 
(see e.g.,  Ammari and Kang~\cite[p.~53]{AmKa07}, \cite[\S 12.1]{DaLaMu21}). 
Moreover,  $S_{q,n}$ is even, real analytic in ${\mathbb{R}}^{n}\setminus q{\mathbb{Z}}^{n}$, and  locally integrable in ${\mathbb{R}}^{n}$
 (see e.g., \cite[Thm.~12.4]{DaLaMu21}).  We now introduce the periodic single layer potential.
Let $\Omega_Q$ be a bounded open  subset of ${\mathbb{R}}^{n}$ of class $C^{1,\alpha}$ for some $\alpha\in\mathopen]0,1[$ such that $\overline{\Omega_Q}\subseteq Q$. We define the following two periodic domains:
\begin{equation*}
\mathbb{S}_{q }[\Omega_Q] \equiv\bigcup_{z\in {\mathbb{Z}}^{n}}\left(
q z+\Omega_Q 
\right), \qquad
\mathbb{S}_{q }[\Omega_Q]^- \equiv
{\mathbb{R}}^{n}\setminus\overline{\mathbb{S}_{q }[\Omega_Q]} \,
\end{equation*}
and we set
\[
v_q[\partial\Omega_Q, \mu](x)\equiv
\int_{\partial\Omega_Q} S_{q,n}(x-y)\mu(y)\,d\sigma_{y}\qquad\forall x\in {\mathbb{R}}^{n}\,
\]
and
\[
W_q^\ast[\partial\Omega_Q, \mu](x)\equiv
\int_{\partial\Omega_Q}
 \nu_{\Omega_Q}(x) \cdot 
DS_{q,n}(x-y)\mu(y)\,d\sigma_{y}\qquad\forall x\in  \partial\Omega_Q
\]
for all $\mu\in L^{2}(\partial\Omega_Q)$.  The symbol $\nu_{\Omega_Q}$ denotes the outward unit normal 
field to $\partial\Omega_Q$,  $d\sigma$ denotes the area element on $\partial\Omega_Q$ and $DS_{q,n}$ denotes the gradient of $S_{q,n}$.
The function $v_q[\partial\Omega_Q, \mu]$ is called the $q$-periodic single layer potential.
  Now let $\mu\in C^{0,\alpha}(\partial\Omega_Q)$. As is well known,  $v^+_q[\partial\Omega_Q, \mu]\equiv v_q[\partial\Omega_Q, \mu]_{|\overline{\mathbb{S}_q[\Omega_Q]}}$ belongs to $C_{q}^{1,\alpha}(\overline{\mathbb{S}_{q}[\Omega_Q]})$ and $v^-_q[\partial\Omega_Q, \mu]\equiv v_q[\partial\Omega_Q, \mu]_{|\overline{\mathbb{S}_q[\Omega_Q]^-}}$ belongs to $C_{q}^{1,\alpha}(\overline{\mathbb{S}_{q}[\Omega_Q]^-})$ (see \cite[Thm.~12.8]{DaLaMu21}). Moreover, the following jump formula holds: 
\[
\frac{\partial }{\partial \nu_{\Omega_Q}}v_q^\pm[\partial\Omega_Q, \mu] = \mp \frac{1}{2}\mu + W_q^\ast[\partial\Omega_Q, \mu] \qquad \mbox{ on } \partial\Omega_Q.
\]
For a proof of the above formula we refer to \cite[Thm.~12.11]{DaLaMu21}.

 Since our approach will be based on integral operators, we need to understand how  integrals behave when we perturb the domain of  integration. Moreover, we need also to understand the regularity of the normal vector upon domain perturbations. For such reasons, we collect those results in the lemma below   (for a proof, see Lanza de Cristoforis and Rossi \cite[p.~166]{LaRo04}).
\begin{lemma}\label{rajacon}
 Let $\alpha$, $\Omega$ be as in \eqref{Omega_def}.  Then the following statements hold.
\begin{itemize}
\item[(i)] For each $\psi \in C^{1,\alpha}(\partial\Omega, \mathbb{R}^{n})\cap\mathcal{A}_{\partial \Omega}$, there exists a unique  
$\tilde \sigma[\psi] \in C^{0,\alpha}(\partial\Omega)$ such that $\tilde \sigma[\psi] > 0$ and 
\[ 
  \int_{\psi(\partial\Omega)}\omega(s)\,d\sigma_s=  \int_{\partial\Omega}\omega \circ \psi(y)\tilde\sigma[\psi](y)\,d\sigma_y, \qquad \forall \omega \in L^1(\psi(\partial\Omega)).
\]
Moreover, the map $\tilde \sigma[\cdot]$ from $C^{1,\alpha}(\partial\Omega, \mathbb{R}^n)\cap\mathcal{A}_{\partial \Omega}  $ to $ C^{0,\alpha}(\partial\Omega)$ is real analytic.
\item[(ii)]  The map from $C^{1,\alpha}(\partial\Omega, \mathbb{R}^n)\cap\mathcal{A}_{\partial \Omega} $ to $ C^{0,\alpha}(\partial\Omega, \mathbb{R}^{n})$ which takes $\psi$ to $\nu_{\mathbb{I}[\psi]} \circ \psi$ is real analytic.
\end{itemize}
\end{lemma}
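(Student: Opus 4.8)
The plan is to reduce both statements to the classical local differential-geometric formulas for the surface-area element and for the normal, and then to certify real analyticity by recognizing each building block as either a bounded multilinear map or a real-analytic superposition (Nemytskii) operator on Schauder spaces. Since both conclusions are local, I would first fix a finite atlas of $C^{1,\alpha}$ local parametrizations $X\colon V\to\partial\Omega$ (with $V$ open in $\mathbb{R}^{n-1}$) and prove the assertions chart by chart; a subordinate partition of unity glues the local statements together without affecting analyticity.

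For part (i), in a chart the surface measure on $\psi(\partial\Omega)$ pulled back to $\partial\Omega$ is governed by the tangential Jacobian
\[
\tilde\sigma[\psi]\circ X=\frac{\sqrt{\det\big(D(\psi\circ X)^{t}\,D(\psi\circ X)\big)}}{\sqrt{\det\big(DX^{t}\,DX\big)}}\,,
\]
which is exactly the factor making the change-of-variables identity in (i) hold for every $\omega\in L^1$. I would establish \emph{existence} by defining $\tilde\sigma[\psi]$ through this formula and verifying the integral identity via the classical area formula for $C^1$ embeddings; the positivity $\tilde\sigma[\psi]>0$ and the bound $\tilde\sigma[\psi]\in C^{0,\alpha}$ then follow, the positivity being forced by $\psi\in\mathcal{A}_{\partial\Omega}$ (injectivity of the differential makes the Gram matrix $D(\psi\circ X)^{t}D(\psi\circ X)$ positive definite, so its determinant is bounded away from $0$ by compactness of $\partial\Omega$). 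For \emph{uniqueness} I would use that $\psi$ is a bi-Lipschitz homeomorphism onto its image, so that $\omega\mapsto\omega\circ\psi$ runs through all of $L^1(\partial\Omega)$ as $\omega$ runs through $L^1(\psi(\partial\Omega))$; testing the two identities against such $\omega$ forces two admissible densities to agree almost everywhere, hence everywhere by continuity. \emph{Real analyticity} is the core: the map $\psi\mapsto D(\psi\circ X)$ is linear and continuous from $C^{1,\alpha}$ into $C^{0,\alpha}$, hence real analytic; passage to the Gram matrix is a bounded bilinear, thus real-analytic, map into the Banach algebra $C^{0,\alpha}$; the determinant is a polynomial in the entries and therefore real analytic; and the superposition with $t\mapsto\sqrt{t}$, real analytic on $\mathopen]0,+\infty[$, is a real-analytic Nemytskii operator on the open subset of $C^{0,\alpha}$ where the argument stays positive. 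Composing these real-analytic maps yields the claim.

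For part (ii) I would write the unnormalized normal to $\psi(\partial\Omega)$ at $\psi(X(t))$ as the vector $N[\psi]$ whose components are, up to sign, the $(n-1)\times(n-1)$ minors of the $n\times(n-1)$ matrix $D(\psi\circ X)$ (equivalently the generalized cross product of the columns $\partial_1(\psi\circ X),\dots,\partial_{n-1}(\psi\circ X)$), so that $\nu_{\mathbb{I}[\psi]}\circ\psi=\pm N[\psi]/|N[\psi]|$. Each minor is a polynomial in the entries of $D(\psi\circ X)$, whence $\psi\mapsto N[\psi]$ is real analytic into $C^{0,\alpha}(\partial\Omega,\mathbb{R}^n)$; since $\psi$ is an immersion, $N[\psi]$ never vanishes and $|N[\psi]|$ is bounded below, so the normalization $v\mapsto v/|v|$ is a real-analytic map on the relevant open set and the composition is again real analytic. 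It remains to fix the sign so that the field is the \emph{outward} normal to the bounded component $\mathbb{I}[\psi]$; as the outward normal depends continuously on $\psi$ and $N[\psi]/|N[\psi]|$ is continuous, the sign relating them is locally constant, and real analyticity being a local property this suffices.

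I expect the main obstacle to be the \emph{real analyticity of the nonlinear superposition operators} $t\mapsto\sqrt{t}$ and $v\mapsto v/|v|$ acting on the Schauder space $C^{0,\alpha}$, together with the verification that the arguments remain in the open sets where these maps are analytic. This rests on the fact that $C^{0,\alpha}(\partial\Omega)$ is a Banach algebra and on the known real analyticity of Nemytskii operators generated by real-analytic functions on such spaces; everything else reduces to compositions of bounded multilinear maps, which are automatically real analytic.
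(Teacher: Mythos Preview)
Your argument is correct and is essentially the standard route to this result: localize via an atlas, write $\tilde\sigma[\psi]$ and $\nu_{\mathbb{I}[\psi]}\circ\psi$ through Gram determinants and generalized cross products of the columns of $D(\psi\circ X)$, and then observe that every step is a composition of bounded multilinear maps on $C^{0,\alpha}$ with real-analytic Nemytskii operators ($\sqrt{\cdot}$, $v\mapsto v/|v|$) acting on the open subsets where their arguments stay bounded away from the singularity. The only point I would make slightly more explicit is the passage from chartwise analyticity to global analyticity on $C^{1,\alpha}(\partial\Omega,\mathbb{R}^n)$: one should note that the restriction maps $\psi\mapsto\psi\circ X\in C^{1,\alpha}(\overline{V},\mathbb{R}^n)$ are linear and continuous and that the $C^{0,\alpha}(\partial\Omega)$-norm is controlled by finitely many chart norms, so the partition-of-unity gluing is indeed compatible with the Banach-space analyticity statement.

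For comparison, the paper does not supply an independent proof of this lemma at all: it is stated as a known preliminary and referred entirely to Lanza de Cristoforis and Rossi \cite[p.~166]{LaRo04}. Your write-up therefore goes well beyond what the paper does, and in fact reproduces the line of argument of that reference. What you gain is a self-contained account; what the paper gains by citing is brevity and the ability to invoke the result as a black box in the subsequent analyticity proofs.
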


\section{Analyticity of the solution}

Our first goal is to transform problem \eqref{bvp} into an integral equation. In order to analyze the solvability of the obtained integral equation, we need the following lemma.

\begin{lemma}\label{leminteq}
Let $q \in  {\mathbb{D}}_{n}^{+}({\mathbb{R}})$. Let $\alpha$, $\Omega$ be as in \eqref{Omega_def}. 
	  Let $\phi\in C^{1,\alpha}(\partial\Omega, \mathbb{R}^n) \cap {\mathcal{A}}_{\partial\Omega}^{\widetilde{Q}}$.
Let $N$ be the map from $C^{0,\alpha}(\partial q\mathbb{I}[\phi])$  to itself, defined by
\[
N[\mu] \equiv \frac{1}{2}\mu+W_{q}^\ast[\partial q\mathbb{I}[\phi], \mu] \qquad \forall \mu \in C^{0,\alpha}(\partial q\mathbb{I}[\phi]).
\]
Then $N$ is a linear homeomorphism from 
$C^{0,\alpha}(\partial q\mathbb{I}[\phi])$ to itself. Moreover, $N$ restricts to a linear homeomorphism from 
$C^{0,\alpha}(\partial q\mathbb{I}[\phi])_0$ to itself.
\end{lemma}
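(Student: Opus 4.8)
The strategy is the classical one for second-kind integral operators associated with the single layer potential: establish that $N$ is Fredholm of index zero, and then prove injectivity, so that surjectivity and the bounded inverse theorem finish the argument. First I would recall that $W_q^\ast[\partial q\mathbb{I}[\phi],\cdot]$ is a compact operator on $C^{0,\alpha}(\partial q\mathbb{I}[\phi])$. This is a standard fact for the adjoint of the double layer potential on a $C^{1,\alpha}$ boundary: the kernel $\nu_{q\mathbb{I}[\phi]}(x)\cdot DS_{q,n}(x-y)$ has a weak singularity, because $DS_{q,n}(x-y) = DS_n(x-y) + R_q(x-y)$ with $S_n$ the standard fundamental solution of $\Delta$ and $R_q$ real analytic near the diagonal, and the usual $C^{1,\alpha}$ estimates on the non-periodic kernel plus the smoothness of $R_q$ give compactness (this can be quoted from \cite{DaLaMu21}, e.g.\ in the vein of Thm.~12.8–12.11 there). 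Consequently $N = \frac12 I + W_q^\ast$ is a compact perturbation of $\frac12$ times the identity, hence Fredholm of index zero on $C^{0,\alpha}(\partial q\mathbb{I}[\phi])$.

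The core of the argument is therefore injectivity of $N$ on $C^{0,\alpha}(\partial q\mathbb{I}[\phi])$. Suppose $N[\mu]=0$, i.e.\ $\frac12\mu + W_q^\ast[\partial q\mathbb{I}[\phi],\mu]=0$ on $\partial q\mathbb{I}[\phi]$. By the jump formula recalled in the excerpt, this says exactly that $\frac{\partial}{\partial\nu_{q\mathbb{I}[\phi]}}v_q^{+}[\partial q\mathbb{I}[\phi],\mu] = 0$ on $\partial q\mathbb{I}[\phi]$, where $v_q^{+}$ is the restriction of the periodic single layer potential to $\overline{\mathbb{S}_q[q\mathbb{I}[\phi]]}$. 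Now $v_q^{+}[\partial q\mathbb{I}[\phi],\mu]$ is harmonic in $\mathbb{S}_q[q\mathbb{I}[\phi]]$, which on each cell is the bounded open set $q\mathbb{I}[\phi]$, and it has vanishing normal derivative on the boundary; a Green's identity on $q\mathbb{I}[\phi]$ (the interior component, which is bounded, so no decay issue arises) forces $v_q^{+}[\partial q\mathbb{I}[\phi],\mu]$ to be constant on $q\mathbb{I}[\phi]$, hence on $\overline{q\mathbb{I}[\phi]}$ by continuity. Next I would pass to the exterior periodic domain $\mathbb{S}_q[q\mathbb{I}[\phi]]^-$: the function $v_q^{-}[\partial q\mathbb{I}[\phi],\mu]$ is $q$-periodic, harmonic there, continuous across $\partial q\mathbb{I}[\phi]$ (the single layer potential is continuous), so it equals that same constant on $\partial q\mathbb{I}[\phi]$. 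By uniqueness for the periodic Dirichlet problem in $\mathbb{S}_q[q\mathbb{I}[\phi]]^-$ (available from the periodic potential theory of \cite{DaLaMu21}), $v_q^{-}[\partial q\mathbb{I}[\phi],\mu]$ is identically that constant on $\overline{\mathbb{S}_q[q\mathbb{I}[\phi]]^-}$. Hence $\partial_\nu v_q^{-}=0$ on $\partial q\mathbb{I}[\phi]$ as well, and subtracting the two one-sided normal derivatives and using the other jump relation $\partial_\nu v_q^{-} - \partial_\nu v_q^{+} = \mu$ yields $\mu = 0$. This proves $N$ injective, hence, being Fredholm of index zero, a linear homeomorphism of $C^{0,\alpha}(\partial q\mathbb{I}[\phi])$ onto itself.

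It remains to check that $N$ maps $C^{0,\alpha}(\partial q\mathbb{I}[\phi])_0$ into itself and that the restriction is onto. For the invariance, I would compute $\int_{\partial q\mathbb{I}[\phi]} N[\mu]\,d\sigma$ for $\mu\in C^{0,\alpha}(\partial q\mathbb{I}[\phi])_0$: writing $N[\mu]=\frac12\mu + W_q^\ast[\partial q\mathbb{I}[\phi],\mu]$ and using Fubini together with the fact that $\int_{\partial q\mathbb{I}[\phi]} \nu_{q\mathbb{I}[\phi]}(x)\cdot DS_{q,n}(x-y)\,d\sigma_x$ is a constant independent of $y$ (by the divergence theorem applied to the periodic analogue of the Gauss integral, $\int_{\partial q\mathbb{I}[\phi]}\partial_{\nu_x}S_{q,n}(x-y)\,d\sigma_x = \frac12 - \frac{|q\mathbb{I}[\phi]|_n}{|Q|_n}$ for $y\in\partial q\mathbb{I}[\phi]$, or some such explicit value coming from $\Delta S_{q,n}=\sum_z\delta_{qz}-1/|Q|_n$), one gets $\int_{\partial q\mathbb{I}[\phi]} N[\mu]\,d\sigma = c\int_{\partial q\mathbb{I}[\phi]}\mu\,d\sigma = 0$. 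So $N$ restricts to a continuous linear operator on $C^{0,\alpha}(\partial q\mathbb{I}[\phi])_0$; it is injective because $N$ is; and it is Fredholm of index zero on the subspace (a closed finite-codimension subspace that is $N$-invariant, with the same compact-plus-scalar structure inherited), hence onto, hence a homeomorphism of $C^{0,\alpha}(\partial q\mathbb{I}[\phi])_0$ by the open mapping theorem.

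The main obstacle I anticipate is the injectivity step, specifically the passage to the exterior periodic domain: one must be careful that the relevant uniqueness statement for the periodic Dirichlet problem in $\mathbb{S}_q[q\mathbb{I}[\phi]]^-$ is genuinely available (decay/boundedness of periodic harmonic functions, which is automatic by periodicity and the maximum principle on a fundamental domain), and that the single layer potential is indeed continuous across $\partial q\mathbb{I}[\phi]$ and $q$-periodic with the stated regularity — all of which I would quote from the periodic potential theory in \cite{DaLaMu21} rather than reprove. The compactness of $W_q^\ast$ and the computation of the Gauss-type integral are routine given the decomposition $S_{q,n}=S_n+(\text{analytic})$.
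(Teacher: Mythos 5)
Your overall strategy (compactness of $W_q^\ast$, Fredholm of index zero, injectivity via the jump formulas, then invariance of the zero-mean subspace via a Gauss-type integral) is the standard potential-theoretic route and is genuinely different from the paper's proof, which consists of two citations: \cite[Thm.~12.20]{DaLaMu21} for the homeomorphism on $C^{0,\alpha}(\partial q\mathbb{I}[\phi])$ and \cite[Prop.~12.15]{DaLaMu21} for the fact that $N[\mu]$ has zero integral if and only if $\mu$ does. Your identity $\int_{\partial q\mathbb{I}[\phi]}N[\mu]\,d\sigma = c\int_{\partial q\mathbb{I}[\phi]}\mu\,d\sigma$ with $c = 1-|q\mathbb{I}[\phi]|_n/|Q|_n\neq 0$ is precisely the content of that proposition, and it yields surjectivity of the restriction more directly than your Fredholm-on-the-subspace argument: if $f$ has zero mean, then $\mu=N^{-1}[f]$ satisfies $c\int_{\partial q\mathbb{I}[\phi]}\mu\,d\sigma=\int_{\partial q\mathbb{I}[\phi]}f\,d\sigma=0$.

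The injectivity step, however, has two concrete problems. First, with the jump formula as stated in the paper, $\partial_{\nu}v_q^{\pm}[\partial q\mathbb{I}[\phi],\mu]=\mp\tfrac12\mu+W_q^\ast[\partial q\mathbb{I}[\phi],\mu]$, so $N[\mu]$ equals the \emph{exterior} normal derivative $\partial_{\nu}v_q^{-}[\partial q\mathbb{I}[\phi],\mu]$, not the interior one as you claim; $N[\mu]=0$ is the homogeneous periodic Neumann condition in $\mathbb{S}_q[q\mathbb{I}[\phi]]^-$, and your chain of implications must be run in the reverse order (uniqueness up to constants for the exterior periodic Neumann problem first, then continuity of the single layer across the boundary, then the maximum principle in the bounded holes, then the jump $\partial_\nu v_q^- - \partial_\nu v_q^+=\mu$). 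Second, and more seriously, the periodic single layer potential is \emph{not} harmonic for an arbitrary density: since $\Delta S_{q,n}=\sum_{z}\delta_{qz}-|Q|_n^{-1}$, one has $\Delta v_q[\partial q\mathbb{I}[\phi],\mu]=-|Q|_n^{-1}\int_{\partial q\mathbb{I}[\phi]}\mu\,d\sigma$ off the boundary, so the Green's identity and the uniqueness statements you invoke are only available once you know $\int_{\partial q\mathbb{I}[\phi]}\mu\,d\sigma=0$. This is recoverable --- applying the divergence theorem on $Q\setminus\overline{q\mathbb{I}[\phi]}$ (the contributions on opposite faces of $\partial Q$ cancel by periodicity) to $\partial_\nu v_q^-[\partial q\mathbb{I}[\phi],\mu]=N[\mu]=0$ gives $0=-\frac{|Q|_n-|q\mathbb{I}[\phi]|_n}{|Q|_n}\int_{\partial q\mathbb{I}[\phi]}\mu\,d\sigma$, hence zero mean and harmonicity --- but this step is missing from your argument, and the assertion that $v_q^{+}[\partial q\mathbb{I}[\phi],\mu]$ is harmonic is false as stated.
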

\begin{proof}
By \cite[Thm.~12.20]{DaLaMu21}, we deduce that $N$ is a linear homeomorphism from 
$C^{0,\alpha}(\partial q\mathbb{I}[\phi])$ to itself. By \cite[Prop.~12.15]{DaLaMu21}, we have that $\frac{1}{2}\mu+W_{q}^\ast[\partial q\mathbb{I}[\phi], \mu]$ belongs to $C^{0,\alpha}(\partial q\mathbb{I}[\phi])_0$ if and only if $\mu$ belongs to $C^{0,\alpha}(\partial q\mathbb{I}[\phi])_0$. As a consequence, we also have that $N$ restricts to a linear homeomorphism from 
$C^{0,\alpha}(\partial q\mathbb{I}[\phi])_0$ to itself.
\end{proof}

Then, in the following proposition, we show how to convert the Neumann problem into an equivalent integral equation.

\begin{proposition}\label{propAUX}
		  Let $\alpha$, $\Omega$ be as in \eqref{Omega_def}. Let $q \in  {\mathbb{D}}_{n}^{+}({\mathbb{R}})$. 
		Let $\phi\in C^{1,\alpha}(\partial\Omega, \mathbb{R}^n) \cap {\mathcal{A}}_{\partial\Omega}^{\widetilde{Q}}$.
	Let $g \in C^{0,\alpha}(\partial \Omega)$. Let $k \in \mathbb{R}$. Then the boundary value problem 
	\begin{equation}\label{np}
	\left\{
\begin{array}{ll}
\Delta u=0 & {\mathrm{in}}\ \mathbb{S}_{q}[q\mathbb{I}[\phi]]^{-}\,,
\\
u(x+qz)=u(x)& \forall x \in \overline{\mathbb{S}_{q}[q\mathbb{I}[\phi]]^{-}}\, , \forall z \in\mathbb{Z}^n\,,\\
\frac{\partial}{\partial \nu_{q\mathbb{I}[\phi]}}u(x)=g\big( \phi^{(-1)}(q^{-1}x)\big)&\\
\qquad \qquad-\frac{1}{\int_{\partial q\mathbb{I}[\phi]}\, d\sigma}\int_{\partial q\mathbb{I}[\phi]} g\big( \phi^{(-1)}(q^{-1}y)\big) \, d\sigma_y& \forall x\in  \partial q\mathbb{I}[\phi] \,, \\
\int_{\partial q\mathbb{I}[\phi]}u\, d\sigma=k\, &  \\
\end{array}
\right.
	\end{equation}
	has a unique solution $u[q,\phi,g,k]$ in  $C_{q}^{1,\alpha}(\overline{\mathbb{S}_{q}[q\mathbb{I}[\phi]]^-})$. Moreover,
	\begin{equation}\label{solAdd}
	\begin{split}
	u[q,\phi,&g,k](x)=v_{q}^-[\partial q\mathbb{I}[\phi], \mu](x)\\
	&+\frac{1}{\int_{\partial q\mathbb{I}[\phi]}\, d\sigma} \Bigg(k-\int_{\partial q\mathbb{I}[\phi]}v_{q}^-[\partial q\mathbb{I}[\phi], \mu]\, d\sigma\Bigg) \qquad \forall x\in \overline{\mathbb{S}_{q}[q\mathbb{I}[\phi]]^-},
	\end{split}
	\end{equation}
	where $\mu$ is the unique solution in $C^{0,\alpha}(\partial q\mathbb{I}[\phi])_0$ of the integral equation
	\begin{equation}\label{intEq}
	\begin{split}
	\frac{1}{2}\mu(x)&+W_{q}^\ast[\partial q\mathbb{I}[\phi], \mu](x)=g\big( \phi^{(-1)}(q^{-1}x)\big)\\
	&-\frac{1}{\int_{\partial q\mathbb{I}[\phi]}\, d\sigma}\int_{\partial q\mathbb{I}[\phi]} g\big( \phi^{(-1)}(q^{-1}y)\big) \, d\sigma_y\qquad \forall x\in \partial q\mathbb{I}[\phi]\, .
	\end{split}
	\end{equation}
\end{proposition}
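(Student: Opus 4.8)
The plan is to prove Proposition \ref{propAUX} by combining the uniqueness statement quoted from \cite[Thm.~12.23]{DaLaMu21} with an explicit construction of a solution via the single layer potential ansatz, and then to verify that this construction reduces, via the jump formula and Lemma \ref{leminteq}, to the solvable integral equation \eqref{intEq}.

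First I would address \emph{existence}. I look for a solution of the form \eqref{solAdd}, i.e. a single layer potential $v_q^-[\partial q\mathbb{I}[\phi],\mu]$ corrected by an additive constant chosen so that the last condition $\int_{\partial q\mathbb{I}[\phi]}u\,d\sigma = k$ is automatically satisfied; note that adding a constant does not affect harmonicity, $q$-periodicity, or the normal derivative on $\partial q\mathbb{I}[\phi]$, so only the Neumann condition remains to be imposed. Since $v_q^-[\partial q\mathbb{I}[\phi],\mu]\in C_q^{1,\alpha}(\overline{\mathbb{S}_q[q\mathbb{I}[\phi]]^-})$ for $\mu\in C^{0,\alpha}(\partial q\mathbb{I}[\phi])$ and $\Delta S_{q,n}=\sum_{z}\delta_{qz}-\frac{1}{|Q|_n}$ vanishes away from $q\mathbb{Z}^n$ (so $v_q^-$ is harmonic in the exterior periodic domain), and since the jump formula gives $\frac{\partial}{\partial\nu_{q\mathbb{I}[\phi]}}v_q^-[\partial q\mathbb{I}[\phi],\mu] = \frac12\mu + W_q^\ast[\partial q\mathbb{I}[\phi],\mu]$ on $\partial q\mathbb{I}[\phi]$, imposing the Neumann datum in \eqref{np} is exactly equation \eqref{intEq}. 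Now the right-hand side of \eqref{intEq} lies in $C^{0,\alpha}(\partial q\mathbb{I}[\phi])_0$ as already observed in the introduction, and by Lemma \ref{leminteq} the operator $N = \frac12 I + W_q^\ast[\partial q\mathbb{I}[\phi],\cdot]$ is a linear homeomorphism of $C^{0,\alpha}(\partial q\mathbb{I}[\phi])_0$ onto itself; hence \eqref{intEq} has a unique solution $\mu\in C^{0,\alpha}(\partial q\mathbb{I}[\phi])_0$. Plugging this $\mu$ into \eqref{solAdd} produces a function in $C_q^{1,\alpha}(\overline{\mathbb{S}_q[q\mathbb{I}[\phi]]^-})$ that satisfies all four conditions of \eqref{np}, which settles existence.

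For \emph{uniqueness}: the boundary value problem \eqref{np} is precisely of the form covered by \cite[Thm.~12.23]{DaLaMu21} (recall that the prescribed Neumann datum belongs to $C^{0,\alpha}(\partial q\mathbb{I}[\phi])_0$, which is the compatibility condition for the periodic Neumann problem), so it has at most — hence exactly — one solution $u[q,\phi,g,k]$ in $C_q^{1,\alpha}(\overline{\mathbb{S}_q[q\mathbb{I}[\phi]]^-})$, and this solution must coincide with the one just constructed; consequently formula \eqref{solAdd} holds with $\mu$ the unique solution of \eqref{intEq}. It remains only to note that the representation is genuinely well-defined, i.e. that the $\mu$ appearing in \eqref{solAdd} is unambiguously determined: this is exactly the uniqueness part of the statement ``$\mu$ is the unique solution in $C^{0,\alpha}(\partial q\mathbb{I}[\phi])_0$ of \eqref{intEq}'', which again follows from the invertibility of $N$ on $C^{0,\alpha}(\partial q\mathbb{I}[\phi])_0$ in Lemma \ref{leminteq}.

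The argument is essentially a bookkeeping of which properties are preserved under each operation, so there is no deep obstacle; the one point requiring a little care is the verification that the additive-constant correction in \eqref{solAdd} does make $\int_{\partial q\mathbb{I}[\phi]} u\,d\sigma = k$ hold identically — this is a one-line computation using that $\int_{\partial q\mathbb{I}[\phi]}\,d\sigma$ is a nonzero constant — together with the observation that restricting the search to single layer potentials with density of zero integral is not a loss of generality, precisely because $N$ maps $C^{0,\alpha}(\partial q\mathbb{I}[\phi])_0$ onto $C^{0,\alpha}(\partial q\mathbb{I}[\phi])_0$ and the Neumann datum already lies in that subspace. Thus the only substantive input is Lemma \ref{leminteq}, and everything else is the standard potential-theoretic reduction.
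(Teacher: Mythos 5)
Your proposal is correct and follows essentially the same route as the paper's (much terser) proof: unique solvability of \eqref{np} from \cite[Thm.~12.23]{DaLaMu21}, unique solvability of \eqref{intEq} in $C^{0,\alpha}(\partial q\mathbb{I}[\phi])_0$ from Lemma \ref{leminteq}, and verification via the jump formula that the right-hand side of \eqref{solAdd} solves the problem. You merely spell out the details (harmonicity and periodicity of the single layer potential, the effect of the additive constant, the compatibility of the Neumann datum) that the paper compresses into the phrase ``by the properties of the periodic single layer potential.''
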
 
\begin{proof}
By \cite[Thm.~12.23]{DaLaMu21} we know that problem \eqref{np} has a unique solution. Moreover, by Lemma \ref{leminteq}, equation \eqref{intEq} has a unique solution $\mu$ which belongs to $C^{0,\alpha}(\partial q\mathbb{I}[\phi])_0$. Then by the properties of the periodic single layer potential, we deduce that the right hand side of \eqref{solAdd} solves problem \eqref{np}.
\end{proof}

In Proposition \ref{propAUX}, we have seen an integral equation on $\partial q\mathbb{I}[\phi]$ equivalent to problem \eqref{bvp}. However, if we want to study the dependence of the solution of the integral equation on the parameters $(q,\phi,g,k)$, it may be convenient to transform the equation on the $(q,\phi)$-dependent set $\partial q\mathbb{I}[\phi]$ into an equation on a fixed domain. We do so in the lemma below.

\begin{lemma}\label{lemSyst}
	  Let $\alpha$, $\Omega$ be as in \eqref{Omega_def}. Let $q \in  {\mathbb{D}}_{n}^{+}({\mathbb{R}})$. 
	Let $\phi  \in C^{1,\alpha}(\partial\Omega,\mathbb{R}^n) \cap {\mathcal{A}}_{\partial\Omega}^{\widetilde{Q}}$. Let $g \in C^{0,\alpha}(\partial \Omega)$. Then the function $\theta\in C^{0,\alpha}(\partial\Omega)$ solves the equation
	\begin{equation}\label{intEq1}
	\begin{split}
	\frac{1}{2}\theta(t) +\int_{q\phi(\partial\Omega)}\,& \nu_{q\mathbb{I}[\phi]}(q\phi(t)) \cdot  DS_{q,n}(q\phi(t)-y)\theta\big( \phi^{(-1)}(q^{-1}y)\big)d\sigma_y\\&=g(t)-\frac{1}{\int_{ \partial \Omega }\tilde{\sigma}[q\phi]\, d\sigma}\int_{\partial \Omega } g\tilde{\sigma}[q\phi] \, d\sigma
	\qquad \forall t\in \partial\Omega\, ,
	\end{split}
	\end{equation}
	if and only if the function $\mu\in C^{0,\alpha}(\partial q\mathbb{I}[\phi])$, with $\mu$ delivered by 
	 \begin{equation}\label{mudef}
	 \mu(x)=\theta\big( \phi^{(-1)}(q^{-1}x)\big)	\qquad \forall x\in \partial q\mathbb{I}[\phi],
	 \end{equation}
	 solves the equation 
\[%\begin{equation}\label{intEq2}
\begin{split}
	&\frac{1}{2}\mu(x)+W_{q}^\ast[\partial q\mathbb{I}[\phi], \mu](x)\\
	&=g\big( \phi^{(-1)}(q^{-1}x)\big)  -\frac{1}{\int_{\partial q\mathbb{I}[\phi]}\, d\sigma}\int_{\partial q\mathbb{I}[\phi]} g\big( \phi^{(-1)}(q^{-1}y)\big) \, d\sigma_y \quad\forall x\in \partial q\mathbb{I}[\phi]\, .
\end{split}
\]%\end{equation}
	 Moreover, equation \eqref{intEq1} has a unique solution $\theta$ in $C^{0,\alpha}(\partial\Omega)$ and the function $\mu$ delivered by \eqref{mudef} belongs to $C^{0,\alpha}(\partial q\mathbb{I}[\phi])_0$. 
\end{lemma}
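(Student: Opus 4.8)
The plan is to prove the equivalence by a straightforward change of variables on $\partial q\mathbb{I}[\phi]$, pulling everything back to $\partial\Omega$ via the diffeomorphism $x = q\phi(t)$, and then to invoke Lemma~\ref{leminteq} (through Proposition~\ref{propAUX}) for the existence and uniqueness statement. First I would fix $\phi\in C^{1,\alpha}(\partial\Omega,\mathbb{R}^n)\cap{\mathcal{A}}_{\partial\Omega}^{\widetilde{Q}}$ and observe that $q\phi$ is a $C^{1,\alpha}$ diffeomorphism of $\partial\Omega$ onto $\partial q\mathbb{I}[\phi] = q\phi(\partial\Omega)$, so that the assignment $\theta\mapsto\mu$ with $\mu = \theta\circ\phi^{(-1)}(q^{-1}\cdot)$ is a linear bijection from $C^{0,\alpha}(\partial\Omega)$ onto $C^{0,\alpha}(\partial q\mathbb{I}[\phi])$ (composition with a $C^{1,\alpha}$ diffeomorphism and its inverse preserves the Schauder class). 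Conversely, evaluating $\mu$ at $x = q\phi(t)$ gives $\mu(q\phi(t)) = \theta(t)$.

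Next I would rewrite the integral operator $W_q^\ast[\partial q\mathbb{I}[\phi],\mu]$ evaluated at $x = q\phi(t)$ purely in terms of $\theta$: by definition
\[
W_q^\ast[\partial q\mathbb{I}[\phi],\mu](q\phi(t)) = \int_{q\phi(\partial\Omega)}\nu_{q\mathbb{I}[\phi]}(q\phi(t))\cdot DS_{q,n}(q\phi(t)-y)\,\mu(y)\,d\sigma_y,
\]
and substituting $\mu(y) = \theta(\phi^{(-1)}(q^{-1}y))$ produces exactly the second term on the left-hand side of \eqref{intEq1}. For the right-hand side I would use Lemma~\ref{rajacon}(i): the change-of-variables identity with $\psi = q\phi$ gives $\int_{\partial q\mathbb{I}[\phi]}\,d\sigma = \int_{\partial\Omega}\tilde\sigma[q\phi]\,d\sigma$ and $\int_{\partial q\mathbb{I}[\phi]} g(\phi^{(-1)}(q^{-1}y))\,d\sigma_y = \int_{\partial\Omega} g\,\tilde\sigma[q\phi]\,d\sigma$, since $g(\phi^{(-1)}(q^{-1}y))$ pulled back along $\psi = q\phi$ becomes $g(t)$. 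Hence \eqref{intEq} (with $\mu$ as in \eqref{mudef}) holds at $x = q\phi(t)$ for every $t\in\partial\Omega$ if and only if \eqref{intEq1} holds at $t$; since $q\phi$ is onto $\partial q\mathbb{I}[\phi]$, this proves the stated equivalence in both directions.

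Finally, for the uniqueness and the membership $\mu\in C^{0,\alpha}(\partial q\mathbb{I}[\phi])_0$, I would note that by Proposition~\ref{propAUX} (or directly by Lemma~\ref{leminteq}) equation \eqref{intEq} has a unique solution $\mu$ in $C^{0,\alpha}(\partial q\mathbb{I}[\phi])$ and it automatically lies in $C^{0,\alpha}(\partial q\mathbb{I}[\phi])_0$, because the right-hand side of \eqref{intEq} has zero integral on $\partial q\mathbb{I}[\phi]$ and $N$ maps the zero-mean subspace onto itself. Transporting this back through the bijection $\theta\mapsto\mu$ yields a unique $\theta\in C^{0,\alpha}(\partial\Omega)$ solving \eqref{intEq1}, and $\mu$ delivered by \eqref{mudef} is the zero-mean solution of \eqref{intEq}. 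The only point requiring a little care — and the one I would flag as the main (mild) obstacle — is bookkeeping the right-hand side: one must check that the normalizing constant $\frac{1}{\int_{\partial q\mathbb{I}[\phi]}\,d\sigma}\int_{\partial q\mathbb{I}[\phi]} g(\phi^{(-1)}(q^{-1}y))\,d\sigma_y$ transforms exactly into $\frac{1}{\int_{\partial\Omega}\tilde\sigma[q\phi]\,d\sigma}\int_{\partial\Omega} g\,\tilde\sigma[q\phi]\,d\sigma$ under the pullback, which is precisely the content of Lemma~\ref{rajacon}(i) applied with $\omega = g\circ\phi^{(-1)}\circ q^{-1}$ and with $\omega\equiv 1$; everything else is a formal substitution.
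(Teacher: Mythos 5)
Your proposal is correct and follows essentially the same route as the paper's own (very terse) proof: a change of variables along the diffeomorphism $x=q\phi(t)$, using Lemma~\ref{rajacon}(i) to transport the integrals, combined with Lemma~\ref{leminteq} for unique solvability and the zero-mean membership of $\mu$. You have simply spelled out the details that the authors compress into one sentence.
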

 \begin{proof}
It is a direct consequence of the theorem of change of variable in integrals, of Lemma \ref{leminteq}, and of the obvious equality
\[
\int_{\partial q\mathbb{I}[\phi]} \Bigg(g\big( \phi^{(-1)}(q^{-1}x)\big)  -\frac{1}{\int_{\partial q\mathbb{I}[\phi]}\, d\sigma}\int_{\partial q\mathbb{I}[\phi]} g\big( \phi^{(-1)}(q^{-1}y)\big) \, d\sigma_y\Bigg) d\sigma_x=0\, ,
\]
which implies that 
\[
g\big( \phi^{(-1)}(q^{-1}\cdot)\big)  -\frac{1}{\int_{\partial q\mathbb{I}[\phi]}\, d\sigma}\int_{\partial q\mathbb{I}[\phi]} g\big( \phi^{(-1)}(q^{-1}y)\big) \, d\sigma_y
\]
is in $C^{0,\alpha}(\partial q\mathbb{I}[\phi])_0$. \end{proof}
Our next goal is to study the dependence of the solution of the integral equation \eqref{intEq1} upon $(q,\phi,g)$. We wish to apply the implicit function theorem in Banach spaces. Therefore, having in mind equation \eqref{intEq1},  we  introduce the map $\Lambda$ from $ {\mathbb{D}}_{n}^{+}({\mathbb{R}})\times \left(C^{1,\alpha}(\partial\Omega, \mathbb{R}^n) \cap {\mathcal{A}}_{\partial\Omega}^{\widetilde{Q}}\right)\times \big(C^{0,\alpha}(\partial\Omega)\big)^2$ to $C^{0,\alpha}(\partial\Omega)$   by setting
 \[%\begin{equation}\label{LambdaDEF}
 \begin{split}
 \Lambda&[q,\phi,g,\theta](t) \equiv 
 \frac{1}{2}\theta(t)\\ 
 & +\int_{q\phi(\partial\Omega)}\,\nu_{q\mathbb{I}[\phi]}(q\phi(t)) \cdot DS_{q,n}(q\phi(t)-y)\theta\big( \phi^{(-1)}(q^{-1}y)\big)d\sigma_y\\
 &-g(t)+\frac{1}{\int_{ \partial \Omega }\tilde{\sigma}[q\phi]\, d\sigma}\int_{\partial \Omega } g\tilde{\sigma}[q\phi] \, d\sigma\quad \forall t\in \partial\Omega,
\end{split}
\]% \end{equation}
 for all $(q,\phi,g,\theta)\in{\mathbb{D}}_{n}^{+}({\mathbb{R}})\times\left(C^{1,\alpha}(\partial\Omega, \mathbb{R}^n) \cap {\mathcal{A}}_{\partial\Omega}^{\widetilde{Q}}\right)\times \big(C^{0,\alpha}(\partial\Omega)\big)^2$. 

We are now ready to apply the implicit function theorem for real analytic maps in Banach spaces to equation $\Lambda[q,\phi,g,\theta]=0$ and prove that the solution $\theta$ depends analytically on $(q,\phi,g)$.

\begin{proposition}\label{taxi}
	 Let $\alpha$, $\Omega$ be as in \eqref{Omega_def}. 
	  Then the following statements hold. 
\begin{itemize} 
\item[(i)] $\Lambda$ is real analytic.
\item[(ii)] For each $(q,\phi,g) \in {\mathbb{D}}_{n}^{+}({\mathbb{R}})\times\left(C^{1,\alpha}(\partial\Omega, \mathbb{R}^n) \cap {\mathcal{A}}_{\partial\Omega}^{\widetilde{Q}}\right) \times C^{0,\alpha}(\partial\Omega)$, there exists a unique 
$\theta$  in $C^{0,\alpha}(\partial\Omega)$ such that 
\[
\Lambda[q,\phi,g,\theta]=0 	\qquad \mbox{ on } \partial\Omega,
\]
 and we denote such a function by $\theta[q,\phi,g]$.
\item[(iii)] The map  $\theta[\cdot,\cdot,\cdot]$ from ${\mathbb{D}}_{n}^{+}({\mathbb{R}})\times\left(C^{1,\alpha}(\partial\Omega, \mathbb{R}^n) \cap {\mathcal{A}}_{\partial\Omega}^{\widetilde{Q}}\right)\times C^{0,\alpha}(\partial\Omega)$ to  $C^{0,\alpha}(\partial\Omega)$ that takes $(q,\phi,g)$ to $\theta[q,\phi,g]$ is real analytic.		
	\end{itemize}
\end{proposition}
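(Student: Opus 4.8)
The plan is to derive all three assertions from the implicit function theorem for real analytic maps between (open subsets of) Banach spaces, applied to the equation $\Lambda[q,\phi,g,\theta]=0$. We note that the domain of $\Lambda$ is an open subset of a Banach space, because ${\mathbb{D}}_{n}^{+}({\mathbb{R}})$ is open in ${\mathbb{D}}_{n}({\mathbb{R}})$ and $C^{1,\alpha}(\partial\Omega,\mathbb{R}^{n})\cap{\mathcal{A}}_{\partial\Omega}^{\widetilde{Q}}$ is open in $C^{1,\alpha}(\partial\Omega,\mathbb{R}^{n})$ (the set ${\mathcal{A}}_{\partial\Omega}$ being open, and $\phi(\partial\Omega)$ being compact and contained in the open set $\widetilde{Q}$). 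To prove (i), I would first transport the integral defining $\Lambda$ onto the fixed manifold $\partial\Omega$: since $q\,\mathbb{I}[\phi]=\mathbb{I}[q\phi]$ and $q\phi(\partial\Omega)=\partial q\mathbb{I}[\phi]$, the theorem of change of variables in integrals together with Lemma~\ref{rajacon}~(i) yields
\begin{equation*}
\begin{split}
\Lambda[q,\phi,g,\theta](t)={}&\frac{1}{2}\theta(t)-g(t)+\frac{1}{\int_{\partial\Omega}\tilde\sigma[q\phi]\,d\sigma}\int_{\partial\Omega}g\,\tilde\sigma[q\phi]\,d\sigma\\
&+\int_{\partial\Omega}\nu_{q\mathbb{I}[\phi]}(q\phi(t))\cdot DS_{q,n}\big(q\phi(t)-q\phi(s)\big)\,\theta(s)\,\tilde\sigma[q\phi](s)\,d\sigma_s\, ,
\end{split}
\end{equation*}
for all $t\in\partial\Omega$.

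Then I would check that $\Lambda$ is real analytic by examining the summands separately. The maps $\theta\mapsto\frac{1}{2}\theta$ and $g\mapsto-g$ are linear and bounded from $C^{0,\alpha}(\partial\Omega)$ to itself; the map $(q,\phi)\mapsto q\phi$ is bilinear and bounded, hence real analytic, so by Lemma~\ref{rajacon}~(i) the map $(q,\phi)\mapsto\tilde\sigma[q\phi]$ from ${\mathbb{D}}_{n}^{+}({\mathbb{R}})\times\big(C^{1,\alpha}(\partial\Omega,\mathbb{R}^{n})\cap{\mathcal{A}}_{\partial\Omega}^{\widetilde{Q}}\big)$ to $C^{0,\alpha}(\partial\Omega)$ is real analytic; since $\tilde\sigma[q\phi]>0$ and $\tau\mapsto1/\tau$ is real analytic on $\mathopen]0,+\infty[$, also $(q,\phi)\mapsto\big(\int_{\partial\Omega}\tilde\sigma[q\phi]\,d\sigma\big)^{-1}$ is real analytic, and therefore, integration and multiplication being bounded multilinear operations, the third summand above depends real analytically on $(q,\phi,g)$ (it takes values in the subspace of constant functions on $\partial\Omega$). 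The last summand is the only genuinely singular one: to handle it I would appeal to the known real analyticity results for periodic layer potentials upon perturbation of the support of integration and of the periodicity matrix. Concretely, one writes $S_{q,n}$ as the sum of the fundamental solution of the Laplace operator and of a remainder which is real analytic in a neighbourhood of the origin and depends real analytically on $q$; one uses Lemma~\ref{rajacon}~(ii), applied to $\psi=q\phi$, for the real analyticity of $(q,\phi)\mapsto\nu_{q\mathbb{I}[\phi]}\circ(q\phi)$; and one then invokes the abstract analyticity theorems for integral operators with real analytic kernels and with weakly singular kernels from the non-periodic theory of Lanza de Cristoforis and Rossi \cite{LaRo04}. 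This shows that the map taking $(q,\phi,\theta)$ to the function $t\mapsto\int_{\partial\Omega}\nu_{q\mathbb{I}[\phi]}(q\phi(t))\cdot DS_{q,n}(q\phi(t)-q\phi(s))\,\theta(s)\,\tilde\sigma[q\phi](s)\,d\sigma_s$ is real analytic (and linear in $\theta$). Summing up, $\Lambda$ is real analytic, which proves (i).

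Statement (ii) requires no further argument: the equation $\Lambda[q,\phi,g,\theta]=0$ is precisely equation~\eqref{intEq1}, which by Lemma~\ref{lemSyst} admits a unique solution $\theta\in C^{0,\alpha}(\partial\Omega)$; I denote it by $\theta[q,\phi,g]$. For (iii), I would fix $(q_0,\phi_0,g_0)$ in the domain of $\Lambda$, set $\theta_0\equiv\theta[q_0,\phi_0,g_0]$, and compute the partial differential $\partial_\theta\Lambda[q_0,\phi_0,g_0,\theta_0]$. Since $\Lambda[q_0,\phi_0,g_0,\cdot]$ is affine, this differential does not depend on $\theta_0$ and equals the bounded linear operator $L$ of $C^{0,\alpha}(\partial\Omega)$ into itself given by
\begin{equation*}
\begin{split}
L[\theta](t)\equiv{}&\frac{1}{2}\theta(t)\\
&+\int_{q_0\phi_0(\partial\Omega)}\nu_{q_0\mathbb{I}[\phi_0]}(q_0\phi_0(t))\cdot DS_{q_0,n}\big(q_0\phi_0(t)-y\big)\,\theta\big(\phi_0^{(-1)}(q_0^{-1}y)\big)\,d\sigma_y\, .
\end{split}
\end{equation*}
Let $J$ be the isomorphism of $C^{0,\alpha}(\partial q_0\mathbb{I}[\phi_0])$ onto $C^{0,\alpha}(\partial\Omega)$ that takes $\mu$ to $\mu\circ(q_0\phi_0)$, whose inverse takes $\theta$ to $\theta\circ\phi_0^{(-1)}\circ q_0^{-1}$. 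Using $q_0\phi_0(\partial\Omega)=\partial q_0\mathbb{I}[\phi_0]$, a direct computation shows $L=J\circ N\circ J^{-1}$, where $N$ is the operator of Lemma~\ref{leminteq}; since that lemma guarantees that $N$ is a linear homeomorphism of $C^{0,\alpha}(\partial q_0\mathbb{I}[\phi_0])$ onto itself, $L=\partial_\theta\Lambda[q_0,\phi_0,g_0,\theta_0]$ is a linear homeomorphism of $C^{0,\alpha}(\partial\Omega)$ onto itself. By (i), (ii), and the implicit function theorem for real analytic maps in Banach spaces (see, e.g., Deimling \cite{De85}), there is an open neighbourhood of $(q_0,\phi_0,g_0)$ on which $(q,\phi,g)\mapsto\theta[q,\phi,g]$ is real analytic; since $(q_0,\phi_0,g_0)$ is arbitrary, (iii) follows.

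I expect the main obstacle to be the real analyticity of the singular integral summand in (i), that is, the joint real analytic dependence of the periodic integral operator on $(q,\phi)$; once that is available, the rest of the proof is essentially bookkeeping, with uniqueness supplied by Lemma~\ref{lemSyst}, the invertibility of the partial differential by Lemma~\ref{leminteq}, and the final conclusion by the analytic implicit function theorem.
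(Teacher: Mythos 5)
Your proposal is correct and follows essentially the same route as the paper: analyticity of $\Lambda$ via the change of variables to $\partial\Omega$, Lemma~\ref{rajacon}, and the analyticity of periodic layer potentials; uniqueness from Lemma~\ref{lemSyst}; and the implicit function theorem, with the invertibility of $\partial_\theta\Lambda$ obtained by conjugating the operator $N$ of Lemma~\ref{leminteq} with the change-of-variables isomorphism. The only cosmetic difference is that for the singular integral term the paper directly invokes the periodic analyticity result of \cite[Thm.~3.2~(ii)]{LuMuPu20}, whereas you sketch its proof by splitting $S_{q,n}$ into the fundamental solution plus an analytic remainder and appealing to \cite{LaRo04}.
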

\begin{proof}
By \cite[Thm. 3.2 (ii)]{LuMuPu20}, Lemma \ref{rajacon}, and standard calculus in Banach spaces, we deduce the validity of statement (i). Statement (ii) follows by Lemmas \ref{leminteq} and  \ref{lemSyst}. In order to prove (iii), since  the analyticity is a local property,  it suffices to fix $(q_0,\phi_0,g_0)$ in ${\mathbb{D}}_{n}^{+}({\mathbb{R}})\times\left(C^{1,\alpha}(\partial\Omega, \mathbb{R}^n) \cap {\mathcal{A}}_{\partial\Omega}^{\widetilde{Q}}\right)\times C^{0,\alpha}(\partial\Omega)$ and to show that $\theta[\cdot,\cdot,\cdot]$ is real analytic in a 
 neighborhood of $(q_0,\phi_0,g_0)$ in the product space ${\mathbb{D}}_{n}^{+}({\mathbb{R}})\times\left(C^{1,\alpha}(\partial\Omega, \mathbb{R}^n) \cap {\mathcal{A}}_{\partial\Omega}^{\widetilde{Q}}\right)\times C^{0,\alpha}(\partial\Omega)$.  By standard calculus in normed spaces, the partial 
 differential $\partial_{\theta}\Lambda[q_0,\phi_0,g_0,\theta[q_0,\phi_0,g_0]]$  of $\Lambda$ at $(q_0,\phi_0,g_0,\theta[q_0,\phi_0,g_0])$ with respect to the variable $\theta$ is delivered by
 \begin{align*}
\partial_{\theta}&\Lambda[q_0,\phi_0,g_0,\theta[q_0,\phi_0,g_0]](\psi)(t) &\\
=& \frac{1}{2}\psi(t) +\int_{q_0\phi_0(\partial\Omega)}\,\nu_{q_0\mathbb{I}[\phi_0]}(q_0\phi_0(t)) \cdot DS_{q_0,n}(q_0\phi_0(t)-y)\psi\big( \phi_0^{(-1)}(q_0^{-1}y)\big)d\sigma_y\\
 &  \hspace{9cm} \forall t\in \partial\Omega,
\end{align*}
for all $\psi \in C^{0,\alpha}(\partial\Omega)$. Lemma \ref{leminteq} together with a change of variable implies that $\partial_{\theta}\Lambda[q_0,\phi_0,g_0,\theta[q_0,\phi_0,g_0]]$ 
is a linear homeomorphism from $C^{0,\alpha}(\partial\Omega)$ onto  $C^{0,\alpha}(\partial\Omega)$. 
Finally, by the implicit function theorem for real analytic maps in Banach spaces
(see,  e.g., Deimling \cite[Thm. 15.3]{De85}) 
we deduce that $\theta[\cdot,\cdot,\cdot]$ is real analytic in a neighborhood of $(q_0,\phi_0,g_0)$ in 
${\mathbb{D}}_{n}^{+}({\mathbb{R}})\times\left(C^{1,\alpha}(\partial\Omega, \mathbb{R}^n) \cap {\mathcal{A}}_{\partial\Omega}^{\widetilde{Q}}\right)\times C^{0,\alpha}(\partial \Omega)$. 
\end{proof}

\begin{remark} \label{repform}
By Lemma \ref{rajacon}, Propositions \ref{propAUX} and  \ref{taxi}, we have the following representation formula for the solution $u[q,\phi,g,k]$ of problem \eqref{bvp}: 
\begin{align*}
&u[q,\phi,g,k](x) = \int_{\partial\Omega}
 S_{q,n}(x-q\phi(s))\theta[q,\phi,g](s)\tilde \sigma[q\phi](s)\,d\sigma_{s}\\
 &+\frac{ \Bigg(k-\!\int_{\partial\Omega} \int_{\partial\Omega}
 S_{q,n}(q(\phi(t)-\phi(s)))\theta[q,\phi,g](s)\tilde \sigma[q\phi](s)d\sigma_{s}\tilde{\sigma}[q\phi](t)d\sigma_{t}\Bigg)}{\int_{\partial\Omega} \!\tilde \sigma[q\phi]d\sigma}\\
&\forall x\in \overline{\mathbb{S}_{q}[q\mathbb{I}[\phi]]^-},
\end{align*}
for all $(q,\phi,g,k) \in {\mathbb{D}}_{n}^{+}({\mathbb{R}})\times\left(C^{1,\alpha}(\partial\Omega, \mathbb{R}^n) \cap {\mathcal{A}}_{\partial\Omega}^{\widetilde{Q}}\right)\times C^{0,\alpha}(\partial \Omega)\times \mathbb{R}$. 
\end{remark}

By exploiting the representation formula of Remark \ref{repform} and the analyticity result for $(q,\phi,g) \mapsto \theta[q,\phi,g]$ of Proposition \ref{taxi}, we are ready to prove our main result on the analyticity of $u[q,\phi,g,k]$ as a map of the variable $(q,\phi,g,k)$.

\begin{theorem}\label{mainthm}
Let $\alpha$, $\Omega$ be as in \eqref{Omega_def}. Let 
\[
(q_0, \phi_0,g_0,k_0) \in  {\mathbb{D}}_{n}^{+}({\mathbb{R}})\times\left(C^{1,\alpha}(\partial\Omega, \mathbb{R}^n) \cap {\mathcal{A}}_{\partial\Omega}^{\widetilde{Q}}\right)\times C^{0,\alpha}(\partial \Omega)\times \mathbb{R}.
\]
Let $U$ be a bounded open subset of $\mathbb{R}^n$ such that $\overline{U} \subseteq  \mathbb{S}_{q_0}[q_0\mathbb{I}[\phi_0]]^-$. Then there exists an open neighborhood $\mathcal{U}$ of  $(q_0, \phi_0,g_0,k_0)$  in 
$${\mathbb{D}}_{n}^{+}({\mathbb{R}})\times\left(C^{1,\alpha}(\partial\Omega, \mathbb{R}^n) \cap {\mathcal{A}}_{\partial\Omega}^{\widetilde{Q}}\right)\times C^{0,\alpha}(\partial \Omega)\times \mathbb{R}$$ such that the following statements hold.
\begin{itemize}
\item[(i)] $\overline U \subseteq  \mathbb{S}_{q}[q\mathbb{I}[\phi]]^-$ for all $(q,\phi,g,k) \in \mathcal{U}$.
\item[(ii)] Let $m \in \mathbb{N}$. Then the map from $\mathcal{U}$ to $C^m(\overline{U})$ which takes $(q,\phi,g,k)$ to the restriction 
$u[q,\phi,g,k]_{|\overline{U}}$ of $u[q,\phi,g,k]$ to $\overline{U}$ is real analytic.
\end{itemize}
\end{theorem}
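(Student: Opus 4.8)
The plan is to obtain $u[q,\phi,g,k]_{|\overline U}$ from the representation formula of Remark~\ref{repform} and then to show that each of the three ingredients appearing there — the periodic single layer potential evaluated away from the boundary, the ``hidden'' density $\theta[q,\phi,g]\,\tilde\sigma[q\phi]$, and the normalising constant — depends real analytically on the data; the conclusion will follow from the standard calculus of real analytic maps between Banach spaces (sums, products into a Banach algebra, reciprocals of nonvanishing real analytic scalars, bounded multilinear maps, and compositions), together with Proposition~\ref{taxi} and Lemma~\ref{rajacon}.

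First I would prove statement (i), which is purely topological. Since $\overline U$ is compact and contained in the open set $\mathbb S_{q_0}[q_0\mathbb I[\phi_0]]^-$, its distance to $\overline{\mathbb S_{q_0}[q_0\mathbb I[\phi_0]]}=\bigcup_{z\in\mathbb Z^n}\big(q_0z+q_0\overline{\mathbb I[\phi_0]}\big)$ is some $\delta>0$; and since $U$ is bounded while $\overline{\mathbb I[\phi_0]}$ is compact (indeed $\overline{\mathbb I[\phi_0]}\subseteq\widetilde Q$), only finitely many indices $z\in\mathbb Z^n$ are such that $q_0z+q_0\overline{\mathbb I[\phi_0]}$ can meet a $\delta$-neighbourhood of $\overline U$. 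In a small enough neighbourhood of $(q_0,\phi_0)$ the same finite set of indices suffices, and for each such $z$ the translated copy $qz+q\overline{\mathbb I[\phi]}$ converges to $q_0z+q_0\overline{\mathbb I[\phi_0]}$ in the Hausdorff distance as $(q,\phi)\to(q_0,\phi_0)$, by continuity of $\phi\mapsto\phi(\partial\Omega)$ in the Hausdorff distance together with $\overline{\mathbb I[\phi]}=\mathbb I[\phi]\cup\phi(\partial\Omega)$. Shrinking the neighbourhood, one concludes that $\overline U\cap\big(qz+q\overline{\mathbb I[\phi]}\big)=\emptyset$ for every $z\in\mathbb Z^n$, i.e.\ $\overline U\subseteq\mathbb S_q[q\mathbb I[\phi]]^-$; this defines the neighbourhood $\mathcal U$.

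Next, for statement (ii), fix $m\in\mathbb N$ and put $f[q,\phi,g]\equiv\theta[q,\phi,g]\,\tilde\sigma[q\phi]\in C^{0,\alpha}(\partial\Omega)$, which is real analytic in $(q,\phi,g)$ by Proposition~\ref{taxi}, Lemma~\ref{rajacon}, and the Banach algebra property of $C^{0,\alpha}(\partial\Omega)$. By Remark~\ref{repform},
\[
u[q,\phi,g,k]_{|\overline U}=A[q,\phi,g]+\frac{k-B[q,\phi,g]}{\int_{\partial\Omega}\tilde\sigma[q\phi]\,d\sigma}\,,
\]
where $A[q,\phi,g](x)\equiv\int_{\partial\Omega}S_{q,n}\big(x-q\phi(s)\big)\,f[q,\phi,g](s)\,d\sigma_s$ for $x\in\overline U$ and $B[q,\phi,g]\equiv\int_{\partial\Omega}\int_{\partial\Omega}S_{q,n}\big(q(\phi(t)-\phi(s))\big)f[q,\phi,g](s)\,d\sigma_s\,\tilde\sigma[q\phi](t)\,d\sigma_t\in\mathbb R$. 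After shrinking $\mathcal U$ so that (i) holds, for $(q,\phi,g,k)\in\mathcal U$, $x\in\overline U$ and $s\in\partial\Omega$ the argument $x-q\phi(s)$ stays in a fixed compact subset of $\mathbb R^n\setminus q\mathbb Z^n$; exploiting the (joint) real analyticity of $(q,x)\mapsto S_{q,n}(x)$ off $q\mathbb Z^n$ one checks that $(q,\phi)\mapsto\big[(x,s)\mapsto S_{q,n}(x-q\phi(s))\big]$ is real analytic from $\mathcal U$ to $C^m\big(\overline U,C^{0,\alpha}(\partial\Omega)\big)$, and composing this with the bounded bilinear map $C^m\big(\overline U,C^{0,\alpha}(\partial\Omega)\big)\times C^{0,\alpha}(\partial\Omega)\to C^m(\overline U)$ sending $(K,h)$ to $x\mapsto\int_{\partial\Omega}K(x)(s)h(s)\,d\sigma_s$, and with $f[\cdot,\cdot,\cdot]$, gives that $(q,\phi,g)\mapsto A[q,\phi,g]$ is real analytic into $C^m(\overline U)$. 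For the scalar $B$, its inner integral over $\partial\Omega$ in the variable $s$ is the pull-back to $\partial\Omega$ of the periodic single layer potential $v_q[\partial q\mathbb I[\phi],\cdot]$ (with density obtained from $\theta[q,\phi,g]$ as in \eqref{mudef}) evaluated on the boundary; its real analytic dependence on $(q,\phi,g)$ as an element of $C^{0,\alpha}(\partial\Omega)$ is part of the theory of periodic layer potentials, and then integrating against $\tilde\sigma[q\phi]$ via the bounded bilinear map $(\eta,\zeta)\mapsto\int_{\partial\Omega}\eta\zeta\,d\sigma$ shows that $(q,\phi,g)\mapsto B[q,\phi,g]$ is real analytic. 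Finally, $\tilde\sigma[q\phi]>0$ forces $\int_{\partial\Omega}\tilde\sigma[q\phi]\,d\sigma>0$, so its reciprocal is a real analytic function of $(q,\phi)$; multiplying by the affine-in-$k$ quantity $k-B[q,\phi,g]$ and adding $A[q,\phi,g]$ yields that $(q,\phi,g,k)\mapsto u[q,\phi,g,k]_{|\overline U}$ is real analytic from $\mathcal U$ to $C^m(\overline U)$.

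I expect the main obstacle to be not the bookkeeping with products and compositions, which is routine, but rather the two facts about periodic layer potentials that must be quoted: the joint real analyticity of $(q,x)\mapsto S_{q,n}(x)$ — together with all its $x$-derivatives up to order $m$ — on compact subsets of $\mathbb R^n\setminus q\mathbb Z^n$, uniformly for $q$ near $q_0$, which is what allows one to differentiate the term $A$ in the space variable $m$ times; and the real analytic dependence on $(q,\phi)$ of the pulled-back periodic single layer potential on the fixed boundary $\partial\Omega$, which near the diagonal involves the weak singularity of $S_{q,n}$ and is needed for the scalar term $B$. Both are available in \cite{DaLaMu21} and \cite{LuMuPu20}; once they are in place, the proof is just an assembly of Proposition~\ref{taxi}, Lemma~\ref{rajacon}, and Remark~\ref{repform}.
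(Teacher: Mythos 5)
Your proposal is correct and follows essentially the same route as the paper: both establish (i) by shrinking the neighbourhood via a compactness argument and establish (ii) by combining the representation formula of Remark~\ref{repform} with Proposition~\ref{taxi}, Lemma~\ref{rajacon}, the real analyticity of integral operators with real analytic kernels (the paper quotes \cite{LaMu13} for this step, while you invoke \cite{DaLaMu21} and \cite{LuMuPu20}), and standard calculus in Banach spaces. The additional detail you supply --- the explicit decomposition into the term $A$, the scalar $B$, and the normalising constant --- is precisely what the paper's two-sentence proof leaves implicit.
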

\begin{proof}
We first note that, by taking $\mathcal{U}$ small enough, we can deduce the validity of (i). The validity of (ii) follows by the representation formula of Remark \ref{repform}, by Lemma \ref{rajacon}, by Proposition \ref{taxi},  by the regularity results of  \cite{LaMu13} on the analyticity of integral operators with real analytic kernels, and by standard calculus in Banach spaces.
\end{proof}

\begin{acknowledgement}
The authors are members of the `Gruppo Nazionale per l'Analisi Matematica, la Probabilit\`a e le loro Applicazioni' (GNAMPA) of the `Istituto Nazionale di Alta Matematica' (INdAM). P.L.~and P.M.~acknowledge the support of the Project BIRD191739/19 `Sensitivity analysis of partial differential equations in
the mathematical theory of electromagnetism' of the University of Padova.  
P.M.~acknowledges the support of  the grant `Challenges in Asymptotic and Shape Analysis - CASA'  of the Ca' Foscari University of Venice.  P.M.~also acknowledges the support from EU through the H2020-MSCA-RISE-2020 project EffectFact, 
Grant agreement ID: 101008140.
\end{acknowledgement}
%
%\section*{Appendix}
%\addcontentsline{toc}{section}{Appendix}
%%
%%
%When placed at the end of a chapter or contribution (as opposed to at the end of the book), the numbering of tables, figures, and equations in the appendix section continues on from that in the main text. Hence please \textit{do not} use the \verb|appendix| command when writing an appendix at the end of your chapter or contribution. If there is only one the appendix is designated ``Appendix'', or ``Appendix 1'', or ``Appendix 2'', etc. if there is more than one.
%
%\begin{equation}
%a \times b = c
%\end{equation}

%%%%%%%%%%%%%%%%%%%%%%%% referenc.tex %%%%%%%%%%%%%%%%%%%%%%%%%%%%%%
% sample references
% %
% Use this file as a template for your own input.
%
%%%%%%%%%%%%%%%%%%%%%%%% Springer-Verlag %%%%%%%%%%%%%%%%%%%%%%%%%%
%
% BibTeX users please use
% \bibliographystyle{}
% \bibliography{}
%

\end{document}